\newtheorem{theorem}{Theorem}[section]
\newtheorem{lemma}[theorem]{Lemma}
\theoremstyle{definition}
\newtheorem{definition}[theorem]{Definition}
\newcommand{\cB}{{\mathcal B}}
\newcommand{\cF}{{\mathcal F}}
\newcommand{\cT}{{\mathcal T}}
\newcommand{\cP}{{\mathcal P}}
\newcommand{\cS}{{\mathcal S}}
\newcommand{\Eb}{{\mathbb E}}
\newcommand{\Pb}{{\mathbb P}}
\newcommand{\Nb}{{\mathbb N}}
\newcommand{\Sym}{{\rm Sym}}
\newcommand{\Hom}{{\rm Hom}}
\newcommand{\eps}{\varepsilon}
\newcommand{\Var}{{\rm Var}}
\newcommand{\Prob}{\Pb}
\newcommand{\Map}{{\rm Map}}
\begin{document}

\title{Sofic measure entropy via finite partitions}

\author{David Kerr}
\address{\hskip-\parindent
David Kerr, Department of Mathematics, Texas A{\&}M University,
College Station TX 77843-3368, U.S.A.}
\email{kerr@math.tamu.edu}

\begin{abstract}
We give a generator-free formulation of sofic measure entropy
using finite partitions and establish a Kolmogorov-Sinai theorem.
We also show how to compute the values 
for general Bernoulli actions in a concise way using the 
arguments of Bowen in the finite base case.
\end{abstract}

\date{November 5, 2011}

\maketitle

\section{Introduction}

Kolmogorov introduced the notion of entropy for a measure-preserving transformation
of a probability space by first defining it locally 
on partitions as a limit of averages of the Shannon entropy under iteration
and then showing that all generating partitions with finite Shannon entropy yield the same value. 
To handle the case in which there is no such generating partition, Sinai proposed taking 
the supremum over all finite partitions, thereby furnishing what is now the standard definition.
The Kolmogorov-Sinai theorem asserts that this supremum coincides
with Kolmogorov's entropy in the presence of a generator.
Ultimately it was realized that this set-up works 
most generally for measure-preserving actions of amenable groups,
and in the case of a countably infinite amenable group Ornstein and Weiss showed that entropy
is a complete invariant for Bernoulli actions \cite{OrnWei87}.

Recently Lewis Bowen greatly extended the scope of this classical theory 
to measure-preserving actions of countable sofic groups by replacing the internal 
information-theoretic approach of Kolmogorov with 
the statistical-mechanical idea of counting external finite models \cite{Bow10,Bow11a}.
Bowen first defined the dynamical entropy of
a finite partition by measuring the exponential growth of
the number of models with respect to a fixed sofic approximation sequence for the group,
and then he showed that any two generating finite partitions produce the same value. 
By a limiting process he also extended this analysis to partitions with finite Shannon entropy.
Bowen was thereby able to extend the Ornstein-Weiss entropy classification of Bernoulli actions
to a wide class of nonamenable acting groups, including all nontorsion countably infinite
sofic groups.

For various reasons, including the possibility of formulating a variational principle,
one would like to remove the generator assumption in Bowen's definition.
Sinai's solution of taking a supremum over finite partitions does not work in this case, 
as illustrated by the fact that any two nontrivial Bernoulli actions of a countable group containing 
the free group $F_2$ factor onto one another \cite{Bow11}. 
To circumvent this problem, Hanfeng Li and the author developed an operator algebra
approach using approximate homomorphisms that allows
one to broaden the meaning of generator to bounded sequences of functions in $L^\infty$.
This leads to a completely general notion of sofic measure entropy, 
as well as a topological counterpart and a variational principle relating the two \cite{KerLi11}. 
As in Bowen's setting, when the acting group is amenable one 
recovers the Kolmogorov-Sinai entropy \cite{KerLi11a}.

The question still remained, however, whether there exists a generator-free definition
that uses only finite partitions, in the spirit of Sinai. The aim of this note is
to provide such a definition and to establish a Kolmogorov-Sinai theorem for it,
which we do in Section~\ref{S-definition}.
We continue to use the homomorphism perspective of \cite{KerLi11},
although we avoid linearizing and work only
with algebras of sets. The novelty here is to define the entropy locally 
with respect to two partitions playing different roles. The sofic modeling of the
dynamics is expressed with respect to one partition, as in Lewis Bowen's original approach,
while the second partition is used to express the observational scale at which we are 
able to distinguish different models, in analogy with Rufus Bowen's $(n,\eps)$-separated set
definition of topological entropy for homeomorphisms. We take an infimum over all partitions 
playing the first role while holding the second one fixed, and then take a supremum 
over all partitions playing the second role.
We then prove a Kolmogorov-Sinai theorem (Theorem~\ref{T-generator}) 
which says that to compute the entropy it suffices
that all of these partitions range within a given generating $\sigma$-algebra.
This provides for a relatively concise computation of the entropy of general Bernoulli actions of sofic groups
which builds on arguments from \cite{Bow10}. We provide complete details 
of this computation in Section~\ref{S-Bernoulli}. In Section~\ref{S-comparison} we show
that our definition of sofic measure entropy is equivalent to the one from \cite{KerLi11} and hence also,
in the presence of a generating partition with finite Shannon entropy, to Lewis Bowen's definition.
\medskip

\noindent{\it Acknowledgements.}
This work was partially supported by NSF grant DMS-0900938. 
I thank Lewis Bowen and Hanfeng Li for corrections.

\section{Definitions and a Kolmogorov-Sinai theorem}\label{S-definition}

Let $G$ be a countable sofic group. Soficity means that there are a
sequence $\{ d_i \}_{i=1}^\infty$ of positive integers 
and a sequence $\{ \sigma_i : G \to \Sym (d_i ) \}_{i=1}^\infty$
which is asymptotically multiplicative and free in the sense that
\[
\lim_{i\to\infty}
\frac{1}{d_i} \big| \{ k\in \{ 1,\dots ,d_i \} : \sigma_{i,st} (k) = \sigma_{i,s} \sigma_{i,t} (k) \} \big| = 1
\]
for all $s,t\in G$ and
\[
\lim_{i\to\infty}
\frac{1}{d_i} \big| \{ k\in \{ 1,\dots ,d_i \} : \sigma_{i,s} (k) \neq \sigma_{i,t} (k) \} \big| = 1 
\]
for all distinct $s,t\in G$.
Throughout the paper $\Sigma = \{ \sigma_i : G \to \Sym (d_i ) \}_{i=1}^\infty$ will be
a fixed but arbitrary such sofic approximation sequence.

Let $(X,\cB ,\mu)$ be a probability space and $G\curvearrowright X$
a measure-preserving action, which we fix for the purposes of this section and the next.

Write $\cP_d$ for the power set of $\{ 1,\dots ,d \}$, which we view as a $\sigma$-algebra.
The uniform probability measure on $\{ 1,\dots , d \}$ will always be written $\zeta$.
For a family $\cF$ of subsets of
a set $X$ we write $\Sigma (\cF )$ for the $\sigma$-algebra generated by $\cF$.
For a measurable partition $\alpha$ and a finite set $F\subseteq G$ we write $\alpha_F$
for the partition $\{ \bigcap_{s\in F} sA_s : A\in\alpha^F \}$ where $A_s$ denotes the value of $A$ at $s$.

We write $N_\eps (\cdot ,\rho)$ for the maximal cardinality of an $(\eps,\rho)$-separated subset,
i.e., a subset which is $\eps$-separated with respect to $\rho$.

\begin{definition}\label{D-hom}
Let $\alpha$ be a finite measurable partition of $X$, $F$ a finite subset of $G$, and $\delta > 0$.
Let $\sigma$ be a map from $G$ to $\Sym(d)$ for some $d\in\Nb$.
Define $\Hom_\mu (\alpha ,F,\delta ,\sigma )$ to be the set of all homomorphisms
$\varphi : \Sigma (\alpha_F )\to \cP_d$ such that
\begin{enumerate}
\item[(i)] $\sum_{A\in\alpha} |\sigma_s \varphi (A) \Delta \varphi (sA)|/d < \delta$ for all $s\in F$, and

\item[(ii)] $\sum_{A\in\alpha_F} | \zeta (\varphi (A)) - \mu (A) | < \delta$.
\end{enumerate}
\end{definition}

For a partition $\xi\leq\alpha$, write $|\Hom_\mu (\alpha,F,\delta,\sigma)|_\xi$
for the cardinality of the set of restrictions of elements of $\Hom_\mu (\alpha,F,\delta,\sigma)$ to 
$\xi$. Using sums in (i) and (ii) of Definition~\ref{D-hom} ensures the
convenient monotonicity property
that $\Hom_\mu (\alpha,F,\delta,\sigma)\supseteq \Hom_\mu (\alpha',F',\delta',\sigma)$
and hence $|\Hom_\mu (\alpha,F,\delta,\sigma)|_\xi \geq |\Hom_\mu (\alpha',F',\delta',\sigma)|_{\xi'}$
whenever $\alpha\leq\alpha'$, $F\subseteq F'$, $\delta\geq\delta'$, and $\xi\geq\xi'$.

\begin{definition}\label{D-sofic meas ent}
Let $\cS$ be a $\sigma$-subalgebra of $\cB$.
Let $\xi$ and $\alpha$ be finite measurable partitions of $X$ with $\alpha\geq\xi$.
Let $F$ be a nonempty finite subset of $G$ and $\delta > 0$. We define
\begin{align*}
h_{\Sigma ,\mu}^\xi (\alpha,F,\delta) &=
\limsup_{i\to\infty} \frac{1}{d_i} \log |\Hom_\mu (\alpha,F,\delta,\sigma_i)|_\xi ,\\
h_{\Sigma ,\mu}^\xi (\alpha,F) &= \inf_{\delta > 0} h_{\Sigma ,\mu}^\xi (\alpha,F,\delta) ,\\
h_{\Sigma ,\mu}^\xi (\alpha) &= \inf_F h_{\Sigma ,\mu}^\xi (\alpha,F) ,\\
h_{\Sigma ,\mu}^\xi (\cS) &= \inf_\alpha h_{\Sigma ,\mu}^\xi (\alpha) , \\
h_{\Sigma ,\mu} (\cS) &= \sup_\xi h_{\Sigma ,\mu}^\xi (\cS)
\end{align*}
where the infimum in the third line is over all nonempty finite subsets of $G$,
the infimum in the fourth line is over all finite partitions $\alpha\subseteq\cS$ which refine $\xi$, and 
the supremum in the last line is over all finite partitions in $\cS$.
If $\Hom_\mu (\alpha,F,\delta,\sigma_i)$ is empty for all sufficiently large $i$,
we set $h_{\Sigma ,\mu}^\xi (\alpha,F,\delta) = -\infty$.
\end{definition}

\begin{definition}
The measure entropy $h_{\Sigma ,\mu} (X,G)$ of the action with respect to $\Sigma$ is defined 
as $h_{\Sigma ,\mu} (\cB)$.
\end{definition}

We now aim to establish in Theorem~\ref{T-generator} the analogue of the Kolmogorov-Sinai theorem
in our context.

\begin{lemma}\label{L-hom}
Let $\alpha$ be a finite measurable partition of $X$ and let $\eps > 0$. 
Then there is a $\delta > 0$ such that for every $\sigma$-subalgebra $\cS$ of $\cB$ with 
$\max_{A\in\alpha} \inf_{B\in\cS} \mu(A\Delta B) < \delta$
there exists a homomorphism $\theta : \Sigma(\alpha) \to \cS$ satisfying $\mu (\theta(A)\Delta A) < \eps$
for all $A\in\Sigma(\alpha)$.
\end{lemma}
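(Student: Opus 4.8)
The plan is to reduce everything to choosing, for each atom of $\alpha$, an approximating set in $\cS$ and then repairing these sets into an honest partition of $X$. Since $\Sigma(\alpha)$ is the finite Boolean algebra of unions of atoms of $\alpha$, a homomorphism out of it is determined by, and exists for, any assignment of the atoms to a partition of $X$ lying in $\cS$; so producing such a partition that tracks $\alpha$ closely in measure is all that is required. Write $\alpha = \{A_1,\dots,A_n\}$. By hypothesis I may pick, for each $i$, a set $B_i\in\cS$ with $\mu(A_i\Delta B_i)<\delta$.

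First I would record two consequences of the fact that the $A_i$ partition $X$. Because $A_i\cap A_j=\emptyset$ for $i\neq j$, any point of $B_i\cap B_j$ lies outside one of $A_i,A_j$, so $B_i\cap B_j\subseteq (A_i\Delta B_i)\cup(A_j\Delta B_j)$ and hence $\mu(B_i\cap B_j)<2\delta$. Because the $A_i$ cover $X$, any point missing from $\bigcup_i B_i$ lies in some $A_j\setminus B_j$, so $X\setminus\bigcup_i B_i\subseteq\bigcup_i(A_i\Delta B_i)$ and hence $\mu(X\setminus\bigcup_i B_i)<n\delta$.

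Next I would disjointify: set $C_i=B_i\setminus\bigcup_{j<i}B_j\in\cS$, so that the $C_i$ are pairwise disjoint and $\mu(B_i\Delta C_i)=\mu(B_i\setminus C_i)\leq\sum_{j<i}\mu(B_i\cap B_j)<2(n-1)\delta$, whence $\mu(A_i\Delta C_i)<(2n-1)\delta$. The $C_i$ need not cover $X$, but the leftover $R=X\setminus\bigcup_i C_i=X\setminus\bigcup_i B_i$ lies in $\cS$ with $\mu(R)<n\delta$; absorbing it into the first set I would put $D_1=C_1\cup R$ and $D_i=C_i$ for $i\geq 2$. Then $\{D_1,\dots,D_n\}$ is a partition of $X$ into elements of $\cS$ with $\mu(A_i\Delta D_i)<(3n-1)\delta$ for every $i$.

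Finally, defining $\theta$ on $\Sigma(\alpha)$ by $\theta(\bigcup_{i\in I}A_i)=\bigcup_{i\in I}D_i$ gives a well-defined Boolean homomorphism into $\cS$; here it is precisely the partition property of the $D_i$ that guarantees $\theta$ preserves complements and the top element, so that $\theta$ is genuinely a homomorphism rather than merely a union-preserving map. For $A=\bigcup_{i\in I}A_i$ the elementary inclusion $A\Delta\theta(A)\subseteq\bigcup_{i\in I}(A_i\Delta D_i)$ yields $\mu(A\Delta\theta(A))<n(3n-1)\delta$, so it suffices to choose $\delta$ at the outset with $n(3n-1)\delta\leq\eps$. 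The only real work is the bookkeeping in the third step, tracking how the overlap defect $2\delta$ and the covering defect $n\delta$ propagate through the disjointification and absorption; the homomorphism property and the passage from atoms to all of $\Sigma(\alpha)$ are then automatic.
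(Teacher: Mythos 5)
Your proof is correct and follows essentially the same route as the paper's: both build a partition of $X$ inside $\cS$ by taking near-approximants of the atoms of $\alpha$, disjointifying them, absorbing the leftover set into one cell, and extending atom-wise to a Boolean homomorphism. Your version merely makes explicit the quantitative bookkeeping (the bound $\mu(A_i\Delta D_i)<(3n-1)\delta$ and the final choice $n(3n-1)\delta\leq\eps$) that the paper dispatches with ``it is then readily seen.''
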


\begin{proof}
Let $\delta > 0$, and let $\cS$ be a $\sigma$-subalgebra of $\cB$
such that $\max_{A\in\alpha} \inf_{B\in\cS} \mu(A\Delta B) < \delta$.
Let $A_1 ,\dots ,A_n$ be an enumeration of the elements of $\alpha$. 
For $i=1,\dots,n-1$ we recursively define $\theta(A_i)$ 
to be an element of $\cS$ contained in the complement of $\theta(A_1)\cup\dots\cup\theta(A_{i-1})$ so that
the quantity $\mu(\theta(A_i) \Delta A_i)$ is minimized. 
Then define $\theta(A_n)$ to be the complement of $\theta(A_1)\cup\dots\cup\theta(A_{n-1})$,
which gives us a homomorphism $\theta : \Sigma(\alpha) \to \cS$.
It is then readily seen that if $\delta$ is small enough as a function of $\eps$ and $|\alpha|$ 
we will have $\mu (\theta(A)\Delta A) < \eps$ for all $A\in\Sigma(\alpha)$.
\end{proof}

Define on the set of all homomorphisms from some $\sigma$-subalgebra of $\cB$ containing $\xi$
to $\cP_d$ the pseudometric
\[
\rho_\xi (\varphi,\psi) = \max_{A\in\xi} \frac1d |\varphi(A)\Delta\psi(A)| .
\]
For $\eps > 0$ set
\begin{align*}
h_{\Sigma ,\mu}^{\xi,\eps} (\alpha,F,\delta) &=
\limsup_{i\to\infty} \frac{1}{d_i} \log N_\eps(\Hom_\mu (\alpha,F,\delta,\sigma_i),\rho_\xi) ,\\
h_{\Sigma ,\mu}^{\xi,\eps} (\alpha,F) &= \inf_{\delta > 0} h_{\Sigma ,\mu}^\xi (\alpha,F,\delta) ,\\
h_{\Sigma ,\mu}^{\xi,\eps} (\alpha) &= \inf_F h_{\Sigma ,\mu}^\xi (\alpha,F) 
\end{align*}
where the last infimum is over the nonempty finite subsets of $G$.

\begin{lemma}\label{L-eps entropy}
Let $\gamma$ be a finite measurable partition and let $\kappa > 0$. Then there is an $\eps > 0$
such that $h_{\Sigma ,\mu}^\gamma (\beta) \leq h_{\Sigma ,\mu}^{\gamma,\eps} (\beta) + \kappa$
for all finite measurable partitions $\beta$ refining $\gamma$.
\end{lemma}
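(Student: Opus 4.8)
The plan is to dominate the number of $\gamma$-restrictions of elements of $\Hom_\mu(\beta,F,\delta,\sigma_i)$ by the number of $\eps$-separated ones, up to a combinatorial factor whose exponential growth rate can be made small. The key observation is that both quantities in the asserted inequality only see a homomorphism $\varphi$ through its values on the atoms of $\gamma$: a restriction to $\gamma$ is exactly a partition $\{\varphi(C)\}_{C\in\gamma}$ of $\{1,\dots,d\}$ into $|\gamma|$ labelled pieces, and $\rho_\gamma$ is a pseudometric on this space of partitions. We may assume $|\gamma|\ge 2$, the case $|\gamma|=1$ being trivial. Fix $i$, write $d=d_i$ and $\Hom=\Hom_\mu(\beta,F,\delta,\sigma_i)$, and choose a maximal $(\eps,\rho_\gamma)$-separated subset $S\subseteq\Hom$, so that $|S|=N_\eps(\Hom,\rho_\gamma)$. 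By maximality every $\varphi\in\Hom$ satisfies $\rho_\gamma(\varphi,\psi)<\eps$ for some $\psi\in S$, so its $\gamma$-restriction lies in the $\rho_\gamma$-ball of radius $\eps$ about $\{\psi(C)\}_{C\in\gamma}$. Hence
\[
|\Hom|_\gamma \;\leq\; N_\eps(\Hom,\rho_\gamma)\cdot B(d,\eps) ,
\]
where $B(d,\eps)$ denotes the maximal number of labelled $|\gamma|$-partitions of $\{1,\dots,d\}$ lying in a single $\rho_\gamma$-ball of radius $\eps$.

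The heart of the matter is the purely combinatorial estimate of $B(d,\eps)$, which depends only on $|\gamma|$. Given a center $\{Q_C\}_{C\in\gamma}$, any partition $\{P_C\}$ with $\frac1d|P_C\Delta Q_C|<\eps$ for all $C$ differs from it only on the set $M$ of indices that $P$ and $Q$ assign to different atoms; each such index contributes to exactly two of the symmetric differences, so $2|M|=\sum_{C\in\gamma}|P_C\Delta Q_C|<|\gamma|\eps d$, whence $|M|<m:=|\gamma|\eps d/2$. Since such a partition is determined by the choice of $M$ and a reassignment of each of its indices to one of the remaining $|\gamma|-1$ atoms,
\[
B(d,\eps)\;\leq\;\sum_{j<m}\binom{d}{j}(|\gamma|-1)^j .
\]
By a standard Stirling estimate the rate $\frac1d\log$ of the right-hand side is bounded above by a quantity $\Psi(\eps)$ built from the binary entropy of $|\gamma|\eps/2$ and the factor $\log(|\gamma|-1)$, which depends only on $|\gamma|$ and tends to $0$ as $\eps\to 0$. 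Crucially this bound is independent of $\beta$, because only $\gamma$ (hence $|\gamma|$) enters; this reflects precisely the design of measuring separation at the fixed scale $\gamma$ while letting $\beta$ refine it. I would therefore fix $\eps>0$ small enough that $\Psi(\eps)\leq\kappa$.

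Finally I assemble the pieces. Applying $\frac1{d_i}\log$ to the displayed product inequality and then $\limsup_{i\to\infty}$, and using $\limsup_{i}\frac{1}{d_i}\log B(d_i,\eps)\leq\Psi(\eps)\leq\kappa$ together with superadditivity of $\limsup$, gives
\[
h_{\Sigma,\mu}^\gamma(\beta,F,\delta)\;\leq\;h_{\Sigma,\mu}^{\gamma,\eps}(\beta,F,\delta)+\kappa
\]
for every nonempty finite $F\subseteq G$ and every $\delta>0$ (the case where $\Hom$ is eventually empty being trivial, with both sides $-\infty$). Taking the infimum over $\delta$ and then over $F$ propagates the inequality through the definitions to yield $h_{\Sigma,\mu}^\gamma(\beta)\leq h_{\Sigma,\mu}^{\gamma,\eps}(\beta)+\kappa$ for all $\beta\geq\gamma$, as desired. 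I expect the main obstacle to be making the estimate of $B(d,\eps)$ genuinely uniform in $\beta$ and pinning down that its rate $\Psi(\eps)$ vanishes as $\eps\to0$; the passage through $\limsup$ and the nested infima is then routine bookkeeping.
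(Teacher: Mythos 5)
Your proof is correct and takes essentially the same approach as the paper's: cover $\Hom_\mu(\beta,F,\delta,\sigma)$ by balls around a maximal $(\eps,\rho_\gamma)$-separated set, bound the number of $\gamma$-restrictions per ball by a combinatorial count depending only on $|\gamma|$ and $\eps$, and use Stirling to make its exponential rate at most $\kappa$ for small $\eps$, uniformly in $\beta$ (the paper compresses this to the per-atom bound $\binom{d}{\lfloor\eps d\rfloor}$ on the sets $B$ with $\zeta(B\Delta A)<\eps$). The only nit is terminological: the limsup inequality you invoke is subadditivity, not superadditivity.
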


\begin{proof}
This follows from the fact that, given $\eps > 0$, $A\subseteq \{ 1,\dots ,d\}$, and $d\in\Nb$, the set
of all $B\subseteq \{ 1,\dots ,d\}$ such that $\zeta (B\Delta A) < \eps$ has cardinality at most 
$\binom{d}{\lfloor\eps d\rfloor}$, which by Stirling's approximation
is less than $e^{\kappa d}$ for some $\kappa > 0$ depending on $\eps$ but not on $d$ with
$\kappa\to 0$ as $\eps\to 0$.
\end{proof}

\begin{theorem}\label{T-generator}
Let $\cS$ be a generating $\sigma$-subalgebra of $\cB$. Then
$h_{\Sigma ,\mu} (X,G) = h_{\Sigma ,\mu} (\cS)$.
\end{theorem}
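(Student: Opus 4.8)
The plan is to read the hypothesis that $\cS$ is \emph{generating} in the dynamical sense, namely $\bigvee_{s\in G}s\cS=\cB$ modulo $\mu$-null sets; equivalently, for every finite measurable partition $\alpha$ and every $\delta>0$ there are a finite set $E\subseteq G$ and a finite partition $\beta_0\subseteq\cS$ with $\inf_{B\in\Sigma((\beta_0)_E)}\mu(A\Delta B)<\delta$ for each $A\in\alpha$. Since trivially $h_{\Sigma,\mu}^\xi(\cB)\le h_{\Sigma,\mu}^\xi(\cS)$ whenever $\xi\subseteq\cS$ (the infimum defining the left side runs over a larger family of $\alpha$), it suffices to prove two statements: (1) $h_{\Sigma,\mu}^\xi(\cS)\le h_{\Sigma,\mu}^\xi(\cB)$ for every finite $\xi\subseteq\cS$, which yields $h_{\Sigma,\mu}(\cS)\le h_{\Sigma,\mu}(\cB)$ after taking suprema; and (2) for every finite $\xi\subseteq\cB$ and every $\kappa>0$ there is a finite $\eta\subseteq\cS$ with $h_{\Sigma,\mu}^\xi(\cB)\le h_{\Sigma,\mu}^\eta(\cB)+\kappa$, which, combined with the trivial monotonicity, yields $h_{\Sigma,\mu}(\cB)\le h_{\Sigma,\mu}(\cS)$. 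In both steps I compare model counts through the $\eps$-separated quantities $h_{\Sigma,\mu}^{\xi,\eps}$, using Lemma~\ref{L-eps entropy} to recover the genuine restriction counts at the cost of an arbitrarily small error, so that approximate identifications of homomorphisms become harmless.

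For (1) I fix a finite $\alpha\subseteq\cB$ refining $\xi$ and, by the generating hypothesis, choose $\beta_0\subseteq\cS$ and finite $E$ so that each member of $\alpha$ is $\mu$-approximated by $\Sigma((\beta_0)_E)$; set $\beta=\beta_0\vee\xi\subseteq\cS$, which refines $\xi$ and has each member of $\alpha$ $\mu$-approximated by $\Sigma(\beta_E)$. The transfer runs from $\beta$-models to $\alpha$-models: given $\varphi\in\Hom_\mu(\beta,F',\delta',\sigma_i)$ with $F'\supseteq FE$, each atom $A$ of $\alpha_F$ has an approximant $\hat A\in\Sigma(\beta_{F'})$, and $\{\varphi(\hat A)\}$ is an approximate partition of $\{1,\dots,d_i\}$ with nearly the right $\zeta$-masses; running the recursive correction from the proof of Lemma~\ref{L-hom} inside $(\{1,\dots,d_i\},\cP_{d_i},\zeta)$ turns it into an honest homomorphism $\psi:\Sigma(\alpha_F)\to\cP_{d_i}$, which lies in $\Hom_\mu(\alpha,F,\delta,\sigma_i)$ for suitable parameters because $\varphi$ already nearly respects both the measures and the maps $\sigma_{i,s}$. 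Since $\xi\le\beta$ and $\xi\le\alpha$, the restriction $\psi|_\xi$ is $\rho_\xi$-close to $\varphi|_\xi$, so a $\rho_\xi$-$\eps$-separated set of $\beta$-models is carried to a $\rho_\xi$-$\eps'$-separated set of $\alpha$-models; this gives $h_{\Sigma,\mu}^{\xi,\eps}(\beta)\le h_{\Sigma,\mu}^\xi(\alpha)$ and, via Lemma~\ref{L-eps entropy}, $h_{\Sigma,\mu}^\xi(\beta)\le h_{\Sigma,\mu}^\xi(\alpha)+\kappa$. As $\alpha$ and $\kappa$ were arbitrary, $h_{\Sigma,\mu}^\xi(\cS)\le h_{\Sigma,\mu}^\xi(\cB)$.

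For (2), which is the crux, I use the generating hypothesis to pick $\eta\subseteq\cS$ and finite $E\subseteq G$ with each atom of $\xi$ $\mu$-approximated by $\Sigma(\eta_E)$, and I take a near-minimizer $\beta\subseteq\cB$ with $\beta\ge\eta$ for $h_{\Sigma,\mu}^\eta(\cB)$. Put $\alpha=\beta\vee\xi$, so $\alpha\ge\xi$ and $\alpha\ge\beta\ge\eta$. Here the transfer is the \emph{exact} restriction $\varphi\mapsto\varphi|_{\Sigma(\beta_F)}$, which carries $\Hom_\mu(\alpha,F,\delta,\sigma_i)$ into $\Hom_\mu(\beta,F,\delta,\sigma_i)$ since conditions (i) and (ii) of Definition~\ref{D-hom} only improve when a partition is coarsened. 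The essential point is that observing through the coarse $\eta$ already distinguishes models as sharply as $\xi$ does: if two $\alpha$-models $\varphi_1,\varphi_2$ have $\rho_\eta$-close restrictions, then for each $C\in\eta$ and $s\in E$ the approximate equivariance of condition (i) together with the fact that $\sigma_{i,s}$ preserves the normalized Hamming metric gives $\varphi_1(sC)\approx\sigma_{i,s}\varphi_1(C)\approx\sigma_{i,s}\varphi_2(C)\approx\varphi_2(sC)$, so $\varphi_1$ and $\varphi_2$ agree up to small error on the generators of $\Sigma(\eta_E)$, hence on $\Sigma(\eta_E)$, hence on $\xi$; thus $\rho_\xi$-separated $\alpha$-models restrict to $\rho_\eta$-separated $\beta$-models. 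This yields $h_{\Sigma,\mu}^{\xi,\eps}(\alpha)\le h_{\Sigma,\mu}^{\eta,\eps'}(\beta)\le h_{\Sigma,\mu}^\eta(\beta)$, and Lemma~\ref{L-eps entropy} upgrades the left side to $h_{\Sigma,\mu}^\xi(\alpha)$ up to $\kappa$; since $h_{\Sigma,\mu}^\xi(\cB)\le h_{\Sigma,\mu}^\xi(\alpha)$ and $h_{\Sigma,\mu}^\eta(\beta)$ is within $\kappa$ of $h_{\Sigma,\mu}^\eta(\cB)$, statement (2) follows.

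I expect the main obstacle to be exactly the observation-simulation step in (2): quantifying how the error in $\rho_\eta$ amplifies, through the finitely many boolean combinations of the translates $\{sC:s\in E,\,C\in\eta\}$ that generate $\Sigma(\eta_E)$ and through the $\mu$-approximation of $\xi$ by $\Sigma(\eta_E)$, into the final $\rho_\xi$-error, while keeping this compatible with the tolerances $\delta$ in conditions (i) and (ii). Getting the quantifiers in the right order — first fixing $\kappa$ and extracting $\eps$ from Lemma~\ref{L-eps entropy}, then choosing $\eta$ and $E$, and only afterward letting $\delta\to0$ and $F$ grow — is where the bookkeeping must be done carefully, but no ingredient beyond Lemmas~\ref{L-hom} and~\ref{L-eps entropy} and the isometry property of the $\sigma_{i,s}$ should be required.
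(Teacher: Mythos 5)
Your proposal is correct and amounts to the paper's own argument in different packaging: the paper proves in a single symmetric pass that $h_{\Sigma,\mu}(\cT)\leq h_{\Sigma,\mu}(\cS)$ for any two generating $\sigma$-subalgebras $\cT$ and $\cS$, and your steps (1) and (2) are exactly the specializations $(\cT,\cS)=(\cS,\cB)$ and $(\cT,\cS)=(\cB,\cS)$ of that argument, assembled from the same ingredients --- a Lemma~\ref{L-hom}-style correction to transfer models (performed downstairs in $\cP_{d_i}$ in your (1), upstairs via the homomorphism $\theta$ in the paper), the propagation of $\rho$-closeness through translates using conditions (i) and (ii) of Definition~\ref{D-hom}, and Lemma~\ref{L-eps entropy} to pass between $\varepsilon$-separated counts and restriction counts, with your exact-restriction transfer in (2) corresponding to the paper's $\theta$ degenerating to an inclusion when the target algebra is all of $\cB$. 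The one slip is your closing inventory of ingredients: step (1) also needs the asymptotic multiplicativity of the sofic approximation, not merely the isometry of the $\sigma_{i,s}$, since pushing approximate equivariance from $\beta$-atoms to sets in $\Sigma(\beta_{F'})$ forces a comparison of $\sigma_{ts}\varphi(Y_s)$ with $\sigma_t\sigma_s\varphi(Y_s)$, exactly the point where the paper's proof assumes $\sigma$ is a good enough sofic approximation.
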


\begin{proof}
By symmetry it suffices to show that if $\cT$ is another generating $\sigma$-subalgebra of $\cB$ then
$h_{\Sigma ,\mu} (\cT) \leq h_{\Sigma ,\mu} (\cS)$. 

Let $\gamma$ be a finite partition in $\cT$. By Lemma~\ref{L-eps entropy} there is a $\eps > 0$ such that 
$h_{\Sigma ,\mu}^\gamma (\beta) \leq h_{\Sigma ,\mu}^{\gamma,\eps} (\beta) + \kappa$ for all finite partitions
$\beta\subseteq\cT$ which refine $\gamma$.

Since $\cS$ is generating there are a finite partition $\xi\subseteq\cS$ and 
a nonempty finite set $K\subseteq G$ such that for every $B\in\gamma$ there is a $\Lambda_B \subseteq\xi^K$ 
for which the set $B' = \bigcup_{Y\in\Lambda_B} \bigcap_{s\in K} sY_s$ satisfies 
$\mu (B\Delta B') < \eps/16$. 

Take a finite partition $\alpha\subseteq\cS$ with $\alpha\geq\xi$, a finite set $F\subseteq G$ containing $K\cup \{ e \}$, 
and a $\delta > 0$ such that
\[
\limsup_{i\to\infty} \frac{1}{d_i} \log |\Hom_\mu (\alpha,F,\delta,\sigma_i)|_\xi \leq h_{\Sigma ,\mu}^\xi (\cS)
+ \kappa .
\]
By shrinking $\delta$ if necessary we may assume that it is less than $\eps/(8|\xi^K||K|)$.
Since $\cT$ is generating, there are a finite partition $\beta\subseteq\cT$ refining $\gamma$ and
a nonempty finite set $E\subseteq G$ such that for every $A\in\Sigma(\alpha_F)$ there is a 
$\Lambda_A \subseteq\beta^E$ for which the set 
$A' = \bigcup_{Y\in\Lambda_A} \bigcap_{s\in E} sY_s$ satisfies 
$\mu (A\Delta A') < \delta/(12|\alpha^F|)$.

Take a $\delta' > 0$ which is smaller than $\delta/(9|\alpha^F||\beta^E||E|)$ and also
small enough so that every $\varphi\in\Hom_\mu (\beta,FE,\delta',\sigma)$
satisfies $\zeta (\varphi (B)) \leq 2\mu (B)$ for all $B\in\Sigma (\beta_{FE})$.

By Lemma~\ref{L-hom} there is a homomorphism $\theta : \Sigma (\alpha_F) \to \Sigma (\beta_{FE})$ 
such that $\mu (\theta (A)\Delta A)$ is less than both $\delta/(12|\alpha^K|)$ and $\eps/(16|\xi^K|)$
for all $A\in\Sigma(\alpha_F)$. 

Let $\sigma$ be a map from $G$ to $\Sym (d)$ for some $d\in\Nb$ which we assume to be a good enough
sofic approximation to obtain an estimate below. Let $\varphi\in\Hom_\mu (\beta,FE,\delta',\sigma)$.
Set $\varphi^\natural = \varphi\circ\theta$.
We will show that $\varphi^\natural\in\Hom_\mu (\alpha,F,\delta,\sigma)$.

Let $t\in F$. Then for every $A\in\alpha$ we have, assuming that $\sigma$ is a good enough sofic approximation,
\begin{align*}
\frac1d |\varphi(tA')\Delta\sigma_t \varphi(A')|
&\leq \sum_{Y\in\Lambda_A} \sum_{s\in E} \frac1d |\varphi (tsY_s )\Delta \sigma_t \varphi (sY_s)| \\
&\leq \sum_{Y\in\Lambda_A} \sum_{s\in E} \frac1d \big( |\varphi (tsY_s )\Delta \sigma_{ts} \varphi (Y_s)|
+ |\sigma_{ts} \varphi (Y_s) \Delta \sigma_t \sigma_s \varphi (Y_s)| \\
&\hspace*{30mm} \ + |\sigma_t (\sigma_s \varphi (Y_s)\Delta \varphi (sY_s )| \big) \\
&< 3|\beta^E| |E| \delta' < \frac{\delta}{3|\alpha|} 
\end{align*}
and
$\mu (\theta(tA)\Delta tA' ) \leq \mu(\theta(tA)\Delta tA) + \mu (A\Delta A') < \delta/(6|\alpha|)$,
whence
\begin{align*}
\sum_{A\in\alpha} \frac1d |\varphi^\natural(tA)\Delta \sigma_t \varphi^\natural(A)|
&\leq \sum_{A\in\alpha} \frac1d \big( |\varphi (\theta(tA)\Delta tA'))|
+ |\varphi(tA')\Delta\sigma_t \varphi(A')| \\
&\hspace*{25mm} \ + |\sigma_t \varphi(A'\Delta\theta(A))|\big) \\
&< 2|\alpha|\mu(\theta(tA)\Delta tA') + \frac{\delta}{3} + 2|\alpha|\mu(A'\Delta\theta(A))
< \delta .
\end{align*}
Also, for every $A\in\alpha_F$ we have $| \zeta (\varphi (A')) - \mu (A') | < \delta' < \delta/(3|\alpha^F|)$ 
and $\mu (\theta(A)\Delta A' ) \leq \mu(\theta(A)\Delta A) + \mu (A\Delta A') < \delta/6|\alpha^F|$
and so
\begin{align*}
\sum_{A\in\alpha_F} | \zeta (\varphi^\natural(A)) - \mu (A) | 
&\leq \sum_{A\in\alpha_F} \big( | \zeta (\varphi(\theta(A) \Delta A')) | 
+ | \zeta (\varphi (A')) - \mu (A') | \\
&\hspace*{25mm} \ + | \mu (A' \Delta A) | \big) \\
&\leq \sum_{A\in\alpha_F} \bigg( 2 \mu(\theta(A) \Delta A') + 2\cdot\frac{\delta}{3|\alpha^F|} \bigg) < \delta .
\end{align*}
Thus $\varphi^\natural\in\Hom_\mu (\alpha,F,\delta,\sigma)$.

Let $\Gamma : \Hom_\mu (\beta,FE,\delta',\sigma) \to \Hom_\mu (\alpha,F,\delta,\sigma)$ be the map 
$\varphi\mapsto\varphi^\natural$. Take an $\eps' > 0$ such that $\eps' < \eps /(8|\xi^K||K|)$. 
Let $\varphi$ and $\psi$ be elements of $\Hom_\mu (\beta,FE,\delta',\sigma)$ with
$\rho_\xi (\varphi^\natural,\psi^\natural) < 2\eps'$. For every $B\in\gamma$ we have
\begin{align*}
|\varphi^\natural(B')\Delta\psi^\natural(B')| 
&\leq \sum_{Y\in\Lambda_B} \sum_{s\in K} |\varphi^\natural(sY_s)\Delta\psi^\natural(sY_s)| \\
&\leq \sum_{Y\in\Lambda_B} \sum_{s\in K} \big( |\varphi^\natural(sY_s)\Delta \sigma_s\varphi^\natural(Y_s)|
+ |\sigma_s (\varphi^\natural(Y_s)\Delta\psi^\natural(Y_s))| \\
&\hspace*{30mm} \ + |\sigma_s\psi^\natural(Y_s)\Delta \psi^\natural(sY_s)| \big) \\
&< |\xi^K||K|(\delta + 2\eps' + \delta ) < \frac{\eps}{2}
\end{align*}
and
\begin{align*}
\mu (B\Delta\theta(B')) 
&\leq \mu (B\Delta B') + \mu (B' \Delta\theta(B')) \\
&< \frac{\eps}{16} 
+ \sum_{Y\in\Lambda_B} \mu \big(\big({\textstyle\bigcap_{s\in K} sY} \big)\Delta\theta\big({\textstyle\bigcap_{s\in K} sY} \big)\big) \\
&< \frac{\eps}{16}  + |\Lambda_B | \cdot \frac{\eps}{16|\xi^K|}
\leq \frac{\eps}{8} 
\end{align*}
and hence
\begin{align*}
\rho_\gamma (\varphi,\psi) &= \max_{B\in\gamma} \frac1d |\varphi(B)\Delta\psi(B)| \\
&\leq \max_{B\in\gamma} \frac1d \big( |\varphi(B\Delta\theta(B'))|
+ |\varphi^\natural(B')\Delta\psi^\natural(B')|
+ |\psi(\theta(B')\Delta B)| \big) \\
&\leq 4\max_{B\in\gamma} \mu (B\Delta\theta(B')) + \frac{\eps}{2} < \eps .
\end{align*}
It follows that for every $(\eps,\rho_\gamma)$-separated set $Q\subseteq\Hom_\mu (\beta,FE,\delta',\sigma)$ 
the image $\Gamma(Q)$ is $(\eps',\rho_\xi)$-separated, and so
\begin{align*}
N_\eps (\Hom_\mu (\beta,FE,\delta',\sigma),\rho_\gamma) 
\leq N_{\eps'} (\Hom_\mu (\alpha,F,\delta,\sigma),\rho_\xi ) \leq |\Hom_\mu (\alpha,F,\delta,\sigma)|_\xi .
\end{align*}
Consequently,
\begin{align*}
h_{\Sigma ,\mu}^\gamma (\cT) \leq h_{\Sigma ,\mu}^\gamma (\beta) 
&\leq h_{\Sigma ,\mu}^{\gamma,\eps} (\beta) + \kappa \\
&\leq \limsup_{i\to\infty} \frac{1}{d_i} \log N_\eps (\Hom_\mu (\beta,FE,\delta',\sigma_i ),\rho_\gamma) + \kappa \\
&\leq \limsup_{i\to\infty} \frac{1}{d_i} \log |\Hom_\mu (\alpha,F,\delta,\sigma_i)|_\xi + \kappa \\
&\leq h_{\Sigma ,\mu}^\xi (\cS) + 2\kappa \leq h_{\Sigma ,\mu} (\cS) + 2\kappa .
\end{align*}
Since $\gamma$ was an arbitrary finite partition in $\cT$ and $\kappa$ an arbitrary positive number, 
we conclude that $h_{\Sigma ,\mu} (\cT) \leq h_{\Sigma ,\mu} (\cS)$, as desired.
\end{proof}

\section{Comparison with prior definitions}\label{S-comparison}

In this section we continue the notational conventions of the previous section, with the additional
assumption that the probability space $X$ is standard.
Our aim is to prove that $h_{\Sigma ,\mu} (X,G)$ agrees with the 
entropy defined in Section~2 of \cite{KerLi11a}. By Section~3 of \cite{KerLi11a}, this will also show
that $h_{\Sigma ,\mu} (X,G)$ agrees with Bowen's entropy 
in the presence of a generating partition with finite Shannon entropy. 

Write $h_{\Sigma,\mu}' (X,G)$ for the sofic measure entropy as defined in Section~2 of \cite{KerLi11}.
We will use the following equivalent formulation of $h_{\Sigma,\mu}' (X,G)$ in terms of topological models
(see Section~3 of \cite{KerLi11a}).
Suppose that $X$ is a compact metrizable space, the action of $G$ on $X$ is by homeomorphisms,
and $\rho$ is a compatible metric on $X$. 
For a given $d\in\Nb$, we define on the set of all maps from $\{ 1,\dots ,d\}$ to $X$ the pseudometrics
\begin{align*}
\rho_2 (\varphi ,\psi ) &= \bigg( \frac1d \sum_{k=1}^d \rho (\varphi (k),\psi (k))^2 \bigg)^{1/2} , \\
\rho_\infty (\varphi ,\psi ) &= \max_{k=1,\dots ,d} \,\rho (\varphi (k),\psi (k)) .
\end{align*}
Let $F$ be a nonempty finite subset of $G$, $L$ a finite subset of $C(X)$, and $\delta > 0$.
Let $\sigma$ be a map from $G$ to $\Sym (d)$ for some $d\in\Nb$.
We write $\Map_\mu (\rho ,F, L,\delta ,\sigma )$ for the set of all maps 
$\varphi : \{ 1,\dots ,d\} \to X$ such that, writing $\iota$ for the action of $G$ on $X$,
\begin{enumerate}
\item $\rho_2 (\varphi\circ\sigma_s , \iota_s \circ\varphi ) < \delta$ for all $s\in F$, and

\item $\big| (\varphi_*\zeta)(f) - \mu (f) \big| < \delta$ for all $f\in L$.
\end{enumerate}
For $\varepsilon > 0$ we define
\begin{align*}
h_{\Sigma ,\mu}^\varepsilon (\rho ,F, L,\delta ) &=
\limsup_{i\to\infty} \frac{1}{d_i} \log N_\varepsilon (\Map_\mu (\rho ,F, L,\delta ,\sigma_i ),\rho_\infty ) ,\\
h_{\Sigma ,\mu}^\varepsilon (\rho ) &= \inf_{F} \inf_{L} \inf_{\delta > 0} h_{\Sigma ,\mu}^\varepsilon (\rho ,F,L,\delta) ,\\
h_{\Sigma ,\mu} (\rho ) &= \sup_{\varepsilon > 0} h_{\Sigma ,\mu}^\varepsilon (\rho ) ,
\end{align*}
where in the second line $L$ ranges over the finite subsets of $C(X)$ and
$F$ ranges over the nonempty finite subsets of $G$. 
By Proposition~3.4 of \cite{KerLi11a} we have $h_{\Sigma,\mu}' (X,G) = h_{\Sigma ,\mu} (\rho )$,
a fact which will be used tacitly in the proof below.

\begin{theorem}
$h_{\Sigma,\mu} (X,G) = h_{\Sigma,\mu}' (X,G)$.
\end{theorem}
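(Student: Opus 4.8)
The plan is to build an explicit dictionary between the homomorphisms $\varphi:\Sigma(\alpha_F)\to\cP_d$ counted by $\Hom_\mu$ and the maps $\{1,\dots,d\}\to X$ counted by $\Map_\mu$, and to prove the two inequalities $h_{\Sigma,\mu}(\rho)\leq h_{\Sigma,\mu}(\cB)$ and $h_{\Sigma,\mu}(\cB)\leq h_{\Sigma,\mu}(\rho)$ separately. In one direction the dictionary is pullback: to a map $\varphi$ I associate the homomorphism $\varphi^\sharp(A)=\varphi^{-1}(A)=\{k:\varphi(k)\in A\}$. In the other it is a pushforward with chosen representatives: fixing a point $x_A\in A$ in each atom $A$ of $\alpha$, I associate to a homomorphism $\varphi$ the map $\Psi\varphi$ sending $k$ to $x_A$, where $A$ is the unique atom with $k\in\varphi(A)$ (the sets $\{\varphi(A)\}_{A\in\alpha}$ partition $\{1,\dots,d\}$ since $\varphi$ is a Boolean homomorphism). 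Both passages interchange condition~(ii) of Definition~\ref{D-hom} with the measure condition of $\Map_\mu$ by weak-$*$ approximation: the only point is that the indicator of an atom $A$ is close, both in $\mu$ and in the pushforward $\varphi_*\zeta$, to a continuous function provided $\mu(\partial A)=0$ and $\varphi_*\zeta$ is already close to $\mu$, so one simply throws the relevant continuous functions into the set $L$.

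For $h_{\Sigma,\mu}(\rho)\leq h_{\Sigma,\mu}(\cB)$ I fix $\eps>0$ and choose a single partition $\xi\subseteq\cB$ all of whose atoms have diameter less than $\eps$ and boundary of $\mu$-measure zero. Given any $\alpha\geq\xi$, $F$, $\delta$, I select $F'\supseteq F\cup F^{-1}$, a finite $L'\subseteq C(X)$ approximating the indicators of the atoms of $\alpha_F$, and $\delta'$ small enough that the pullback sends $\Map_\mu(\rho,F',L',\delta',\sigma)$ into $\Hom_\mu(\alpha,F,\delta,\sigma)$ once $\sigma$ is a good enough approximation. Because the atoms of $\xi$ have diameter below $\eps$, two maps that are $\eps$-separated in $\rho_\infty$ have pullbacks whose restrictions to $\xi$ already differ; hence the number of $(\eps,\rho_\infty)$-separated maps is at most $|\Hom_\mu(\alpha,F,\delta,\sigma)|_\xi$. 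Since $h_{\Sigma,\mu}^\eps(\rho)$ is an infimum over $(F',L',\delta')$, exhibiting one admissible triple suffices; letting $\alpha,F,\delta$ vary and then taking $\sup_\eps$ yields the inequality.

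For the reverse inequality $h_{\Sigma,\mu}(\cB)\leq h_{\Sigma,\mu}(\rho)$ I must handle an \emph{arbitrary} $\xi\subseteq\cB$, and I exploit the freedom to refine: I choose $\alpha\geq\xi$ with atoms of small diameter and pick distinct representatives $x_A$, so that $\eta:=\min_{A\neq A'}\rho(x_A,x_{A'})>0$. If $\varphi$ and $\psi$ have different restrictions to $\xi$, they differ on some $k\in\varphi(B)\Delta\psi(B)$ with $B\in\xi$, and there $\Psi\varphi(k)$ and $\Psi\psi(k)$ lie in different atoms of $\xi$ and are thus at least $\eta$ apart; so $\Psi$ carries the $|\Hom_\mu(\alpha,F,\delta,\sigma)|_\xi$ distinct restrictions to an $(\eta,\rho_\infty)$-separated set of maps. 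Checking that these maps lie in some $\Map_\mu(\rho,F',L',\delta',\sigma)$ uses the small diameter of the atoms together with the uniform continuity of the transformations $\iota_s$. Finally, as $(F,\delta)$ run through their infima the associated data $(F',L',\delta')$ range over a family cofinal for computing $h_{\Sigma,\mu}^\eta(\rho)$ — here one invokes that continuous approximations to the indicators of refining partitions have uniformly dense span in $C(X)$, so restricting $L$ to them leaves the infimum unchanged — and the chain of inequalities closes up.

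The routine parts are the two measure-condition transfers and the separation comparisons, which amount to bookkeeping with symmetric differences, diameters, and Stirling-type counts as in Lemma~\ref{L-eps entropy}. The genuine obstacle is transferring the approximate-equivariance conditions, namely condition~(i) of Definition~\ref{D-hom} to the equivariance clause of $\Map_\mu$ and back. One must argue that ``$\varphi(k)$ lies in atom $A$'' is a metrically robust statement: for the pullback this forces the atoms of $\alpha$ to have null boundary, so that two $\rho$-close points almost always share an atom, while for the pushforward it forces the atoms to be small, so that $\sigma_s\varphi(A)\approx\varphi(sA)$ translates into $\Psi\varphi\circ\sigma_s\approx\iota_s\circ\Psi\varphi$ in $\rho_2$. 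Interlocking these geometric requirements with the ``for most $k$'' estimates coming from the quality of the sofic approximation $\sigma$ (including $\sigma_{s^{-1}}\approx\sigma_s^{-1}$), and fixing the order in which $\alpha$, $F'$, and $\delta'$ are chosen, is the delicate step.
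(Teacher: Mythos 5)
Your dictionary (pullback $\varphi^{-1}$ versus pushforward to representative points) is the same one the paper uses, but there is a genuine gap in your pushforward direction $h_{\Sigma,\mu}(\cB)\leq h_{\Sigma,\mu}(\rho)$, and it lies exactly where you located the difficulty: the separation constant. The partition $\xi$ in $h_{\Sigma,\mu}(\cB)=\sup_\xi h^\xi_{\Sigma,\mu}(\cB)$ is an \emph{arbitrary} measurable partition, and in a fixed compact model its distinct atoms will typically be at \emph{zero} Hausdorff distance (they share a boundary, even if that boundary is $\mu$-null; think of $[0,\frac12)$ and $[\frac12,1]$). Then any fine $\alpha\geq\xi$ has atoms hugging the common boundary on both sides, so their representatives $x_A,x_{A'}$ lie within roughly twice the mesh of $\alpha$ of each other although they belong to different $\xi$-atoms; hence your $\eta=\min_{A\neq A'}\rho(x_A,x_{A'})$ (or even the minimum over pairs in different $\xi$-atoms) tends to $0$ as $\alpha$ refines, and two homomorphisms whose restrictions to $\xi$ differ at a single index near that boundary push forward to maps that are $\rho_\infty$-close. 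You cannot escape by freezing $\alpha$: since $\Psi\varphi$ has range in the finite set $\{x_A\}_{A\in\alpha}$, the condition $|(\Psi\varphi)_*\zeta(f)-\mu(f)|<\delta'$ fails for $f\in L'$ oscillating below the mesh of $\alpha$, so with $\alpha$ fixed your family of triples $(F',L',\delta')$ is \emph{not} cofinal for the infimum defining $h^{\eta}_{\Sigma,\mu}(\rho)$ (your ``uniformly dense span'' remark does not rescue this: density of the span is about which $f$ can be approximated, not about which $f$ the finitely-valued maps $\Psi\varphi$ can match in distribution). And if instead $\alpha$ varies with $L'$, then $\eta=\eta(\alpha)$ varies too, and the bounds $h^\xi_{\Sigma,\mu}(\cB)\leq h^{\eta(\alpha)}_{\Sigma,\mu}(\rho,F',L',\delta')$ with shrinking $\eta$ never assemble into $h^{\eps}_{\Sigma,\mu}(\rho)$ for a single $\eps>0$. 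The paper's missing idea is to change the topological model: replace $X$ by the spectrum of a separable $G$-invariant C$^*$-subalgebra of $L^\infty(X,\mu)$ containing the characteristic functions of the atoms of $\xi$, so that $\xi$ becomes \emph{clopen} and its atoms sit at a positive Hausdorff distance $\eps$ fixed before $\alpha$, $F$, $L$, $\delta$ are chosen (this is legitimate because $h_{\Sigma,\mu}(\rho)$ is model-independent). The paper then lets $\alpha$ depend on $(F,L,\delta)$ and bounds $h^\xi_{\Sigma,\mu}(\cB)\leq h^\xi_{\Sigma,\mu}(\alpha,F,\delta')$ directly, so no cofinality argument is ever needed.

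There is a second, more repairable, omission in your pullback direction $h_{\Sigma,\mu}(\rho)\leq h_{\Sigma,\mu}(\cB)$. There the near-optimal triple $(\alpha,F,\delta)$ witnessing $h^\xi_{\Sigma,\mu}(\cB)+\kappa$ is handed to you with an arbitrary measurable $\alpha$, and, as you yourself observe, the transfer of the equivariance condition forces the atoms of $\alpha$ to have $\mu$-null boundary; so ``given any $\alpha\geq\xi$'' is not available, and you need a perturbation lemma comparing counts: the paper passes to a zero-dimensional model, perturbs $\alpha$ to a clopen $\alpha'$ with $\max_i\mu(A_i\Delta A_i')$ small, and shows by composing with a homomorphism $\theta:\Sigma(\alpha_F)\to\Sigma(\alpha'_F)$ that $|\Hom_\mu(\alpha',F,\delta/2,\sigma)|_\xi\leq|\Hom_\mu(\alpha,F,\delta,\sigma)|_\xi$, after which clopenness makes the indicators of the atoms of $\alpha_F$ themselves admissible members of $L$ and eliminates all boundary estimates. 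Your null-boundary-plus-sandwich-function scheme can be made to work for this direction once such a perturbation step is supplied, but without it (and without the model change in the other direction) the proposal does not close.
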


\begin{proof}
We begin by showing that $h_{\Sigma,\mu} (X,G) \leq h_{\Sigma,\mu}' (X,G)$. 
Suppose first that we are in the case $h_{\Sigma,\mu} (X,G) < \infty$. Then given a $\kappa > 0$ we can find
a finite measurable partition $\xi$ of $X$ such that 
$h_{\Sigma,\mu} (X,G) \leq h_{\Sigma,\mu}^\xi (\cB) + \kappa$. 
By replacing $X$ with the spectrum
of some separable unital $G$-invariant C$^*$-subalgebra of $L^\infty(X,\mu)$ containing
the characteristic functions of the atoms of $\xi$, we may assume that $X$ is a compact
metrizable space with compatible metric $\rho$, 
the action of $G$ is by homeomorphisms, and $\xi$ is a clopen partition of $X$. 

Let $\eps > 0$ be smaller than the minimum of the
Hausdorff distances between $B$ and $B'$ over all distinct $B,B' \in\xi$.
Let $F$ be a finite symmetric subset of $G$ containing $e$ and $L$ a finite subset of $C(X)$. 
Let $\delta > 0$, to be further specified.
Let $\delta' > 0$, to be further specified as a function of $\delta$, $n$, $|\alpha_F|$, and $L$.
Let $\sigma : G\to\Sym(d)$ be a good enough sofic approximation for a purpose to be described shortly.

Let $\alpha = \{ A_1 , \dots , A_n \}$ be a finite measurable partition of $X$ refining $\xi$ such that
each $A_i$ has diameter less than $\delta$.
Given a $\varphi\in\Hom_\mu (\alpha,F,\delta',\sigma)$, we construct a map 
$\hat{\varphi} : \{ 1,\dots,d \} \to X$ by considering for
each $k\in \{ 1,\dots,d \}$ the element $f\in\{1,\dots,n\}^F$ such that
$k\in\varphi (\bigcap_{t\in F} tA_{f(t)})$ and defining $\hat{\varphi} (k)$ to be any point in
$\bigcap_{t\in F} tA_{f(t)}$. Given an $s\in F$, set 
\[
C_s = \bigcup_{i=1}^n \big( \sigma_s^{-1} \varphi (A_i) \cap \varphi (s^{-1} A_i) \big) .
\]
Assuming $\sigma$ is a good enough sofic approximation so that the restrictions of $\sigma_s^{-1}$ 
and $\sigma_{s^{-1}}$ agree on a subset of proportional size sufficiently close to one, we will have 
$|C_s|/d \geq 1 - 2n\delta'$. Now for $k\in C_s$ and $i=1,\dots ,n$ we have,
writing $\Upsilon_{e,i}$ for the set of all $f\in \{ 1,\dots,n \}^F$ such that $f(e) = A_i$
and $\Upsilon_{s^{-1},i}$ for the set of all $f\in \{ 1,\dots,n \}^F$ such that $f(s^{-1}) = A_i$,
\begin{align*}
\hat{\varphi} (\sigma_s (k)) \in A_i = \bigcup_{f\in\Upsilon_{e,i}} \bigcap_{t\in F} tA_{f(t)}
&\Leftrightarrow \sigma_s (k) \in \bigcup_{f\in\Upsilon_{e,i}} 
\varphi \bigg( \bigcap_{t\in F} tA_{f(t)}\bigg) = \varphi (A_i ) \\
&\Leftrightarrow k \in\sigma_s^{-1} \varphi (A_i ) \\
&\Leftrightarrow k \in\varphi (s^{-1} A_i ) 
= \bigcup_{f\in\Upsilon_{i,s^{-1}}} \varphi \bigg( \bigcap_{t\in F} tA_{f(t)} \bigg) \\
&\Leftrightarrow \hat{\varphi} (k) 
\in \bigcup_{f\in\Upsilon_{s^{-1},i}} \bigcap_{t\in F} tA_{f(t)} = s^{-1} A_i \\
&\Leftrightarrow s\hat{\varphi} (k) \in A_i .
\end{align*}
Since the diameter of $A_i$ is less than $\delta'$ this shows that 
$\rho (\hat{\varphi} (\sigma_s (k)),s\hat{\varphi} (k)) < \delta'$ and hence, if $\delta'$ is small enough
as a function of $\delta$ and $n$, that $\rho_2 (\hat{\varphi} \circ\sigma_s ,\iota_s \circ\hat{\varphi} ) < \delta$
for all $s\in F$ where $\iota$ denotes the action of $G$ on $X$. Also, for $A\in\alpha_F$ we have
$\hat{\varphi}_* \zeta (1_A) = \zeta (\hat{\varphi}^{-1} (A)) = \zeta(\varphi(A))$, and thus, for $f\in L$
if we find scalars $c_{f,A}$ such that 
$\| f - \sum_{A\in\alpha_F} c_{f,A} 1_A \|_\infty < \delta /3$ and take $\delta'$ small
enough so that $\sum_{A\in\alpha_F} |c_{f,A}| |\zeta(\varphi(A)) - \mu(A)| < \delta /3$ we will have
\begin{align*}
|(\hat{\varphi}_*\zeta)(f) - \mu (f)| 
&\leq \bigg|(\hat{\varphi}_*\zeta)\bigg( f -  \sum_{A\in\alpha_F} c_{f,A} 1_A\bigg)\bigg|
+ \sum_{A\in\alpha_F} |c_{f,A}| |\zeta(\varphi(A)) - \mu(A)| \\
&\hspace*{25mm} \ + \bigg|\mu \bigg(\sum_{A\in\alpha_F} c_{f,A} 1_A - f \bigg) \bigg| < \delta .
\end{align*}
Consequently $\hat{\varphi} \in\Map_\mu (\rho,F,L,\delta,\sigma)$.
Moreover, if $\varphi$ and $\psi$ are elements of $\Hom_\mu (\alpha,F,\delta',\sigma)$
whose restrictions to $\xi$ differ, then $\rho_\infty (\hat{\varphi},\hat{\psi}) > \eps$ by our choice of $\eps$,
and so $|\Hom_\mu (\alpha,F,\delta',\sigma)|_\xi \leq N_\eps (\Map_\mu (\rho,F,L,\delta,\sigma),\rho_\infty )$.
Hence $h_{\Sigma,\mu}^\xi (\alpha,F,\delta') \leq h_{\Sigma,\mu}^\eps (\rho,F,L,\delta)$ 
and so
\[
h_{\Sigma,\mu} (X,G) - \kappa \leq h_{\Sigma,\mu}^\xi (\cB) \leq h_{\Sigma,\mu}^\eps (\rho) 
\leq h_{\Sigma,\mu} (\rho) = h_{\Sigma,\mu}' (X,G) .
\]
Since $\kappa$ was an arbitrary positive number we thus obtain $h_{\Sigma,\mu} (X,G) \leq h_{\Sigma,\mu}' (X,G)$.
In the case $h_{\Sigma,\mu} (X,G) = \infty$ we can argue in the same way only starting with the fact 
that for every $M>0$ there is a finite measurable partition $\xi$ of $X$ satisfying $h_{\Sigma,\mu}^\xi (\cB) \geq M$.

We now establish the reverse inequality.
By replacing $X$ with the spectrum of the unital $G$-invariant C$^*$-subalgebra of $L^\infty(X,\mu)$
generated by the characteristic functions of some countable generating collection of measurable sets,
we may assume that $X$ is a zero-dimensional compact metrizable space with compatible metric $\rho$ and that
the action of $G$ is by homeomorphisms. Suppose that we are in the case $h_{\Sigma,\mu} (\rho) < \infty$. Then
given a $\kappa > 0$ there exists an $\eps > 0$ such that
$h_{\Sigma,\mu} (\rho) \leq h_{\Sigma,\mu}^\eps (\rho) + \kappa$. Pick a finite measurable partition
$\xi$ of $X$ such that the diameter of each of its atoms is less than $\eps$.
Take a finite measurable partition $\alpha = \{ A_1 , \dots , A_n \}$ of $X$ refining $\xi$, a
finite symmetric subset $F$ of $G$ containing $e$, and a $\delta > 0$ such that
$h_{\Sigma,\mu}^\xi (\alpha,F,\delta) \leq h_{\Sigma,\mu}^\xi (\cB) + \kappa$.
Now it is readily seen that if $\alpha' = \{ A_1' , \dots , A_n' \}$ is an ordered measurable partition of $X$ refining $\xi$
such that $\max_{i=1,\dots,n} \mu (A_i \Delta A_i' )$ is sufficiently small then, for
each sufficiently good sofic approximation $\sigma : G\to\Sym(d)$, 
composing an element of $\Hom_\mu (\alpha',F,\delta/2,\sigma)$ with 
the homomorphism $\theta : \Sigma (\alpha_F) \to \Sigma (\beta_F)$ defined by 
$\theta (\bigcap_{t\in F} tA_{f(t)} ) = \bigcap_{t\in F} tB_{f(t)}$ for all $f\in\{1,\dots,n\}^F$
yields an element of $\Hom_\mu (\alpha,F,\delta,\sigma)$, 
in which case $|\Hom_\mu (\alpha',F,\delta/2,\sigma)|_\xi \leq |\Hom_\mu (\alpha,F,\delta,\sigma)|_\xi$.
By a standard approximation argument one can find such an $\alpha'$ consisting
of clopen sets, and so we may assume that $\alpha$ itself is a clopen partition of $X$.

Let $\delta'$ be a positive number to be determined
which is smaller than the Hausdorff distance between $A_i$ and $A_j$ 
for all distinct $i,j=1,\dots,n$. 
Write $L$ for the set of characteristic functions of the atoms of $\alpha_F$.
Let $\sigma$ be a map from $G$ to $\Sym (d)$ for some $d\in\Nb$.
Given a $\varphi\in\Map_\mu (\rho,F,L,(|F|n)^{-1}\delta,\sigma)$, we construct a homomorphism
$\hat{\varphi} : \Sigma(\alpha_F) \to \cP_d$ by declaring $\hat{\varphi} (\bigcap_{t\in F} tA_{f (t)} )$ 
for a $f\in \{ 1,\dots ,n\}^F$ to be the set of all $k\in\{ 1,\dots,d\}$ such that
$\varphi (k) \in\bigcap_{t\in F} tA_{f (t)}$. For $s\in F$
write $D_s$ for the set of all $k\in\{ 1,\dots,d \}$ such that $\rho (\varphi (\sigma_s^{-1} (k)) , s^{-1} \varphi (k)) < \delta'$.
Assuming $\sigma$ is a good enough sofic approximation so that the restrictions of $\sigma_s^{-1}$ 
and $\sigma_{s^{-1}}$ agree on a subset of proportional size sufficiently close to one, 
and that $\delta'$ is sufficiently small as a function of $(|F|n)^{-1}\delta$, we have $|D_s |/d \geq 1-\delta/n$.
Also, for $k\in D_s$ and $i=1,\dots,n$ we have, 
writing $\Upsilon_{e,i}$ for the set of all $f\in \{ 1,\dots,n \}^F$ such that $f(e) = A_i$
and $\Upsilon_{s,i}$ for the set of all $f\in \{ 1,\dots,n \}^F$ such that $f(s) = A_i$,
\begin{align*}
k\in\sigma_s \hat{\varphi} (A_i) 
&\Leftrightarrow \sigma_s^{-1} (k) \in\hat{\varphi} (A_i) 
= \bigcup_{f\in\Upsilon_{e,i}} \hat{\varphi} \bigg(\bigcap_{t\in F} tA_{f(t)} \bigg) \\
&\Leftrightarrow \varphi (\sigma_s^{-1} (k)) 
\in\bigcup_{f\in\Upsilon_{e,i}} \bigcap_{t\in F} tA_{f(t)} = A_i \\
&\Leftrightarrow \varphi(k) \in sA_i = \bigcup_{f\in\Upsilon_{s,i}} \bigcap_{t\in F} tA_{f(t)} 
\hspace*{3mm} \text{(by our choice of $\delta'$)}\\
&\Leftrightarrow k\in \bigcup_{f\in\Upsilon_{s,i}} \hat{\varphi} \bigg( \bigcap_{t\in F} tA_{f(t)} \bigg)
= \hat{\varphi} (sA_i ) .
\end{align*}
Consequently $\sum_{i=1}^n |\sigma_s \hat{\varphi} (A_i) \Delta \hat{\varphi} (sA_i)| < \delta$
for all $s\in F$ assuming $\delta'$ is small enough.
Since $\zeta (\hat{\varphi} (A)) = (\varphi_* \zeta)(1_A)$ for every $A\in \alpha_F$
so that $\sum_{A\in\alpha_F} |\zeta (\hat{\varphi} (A)) - \mu(A)| < |\alpha_F| (|F|n)^{-1} \delta\leq\delta$, it follows that
$\hat{\varphi} \in\Hom_\mu (\alpha,F,\delta,\sigma)$.

Now since the atoms of $\xi$ all have diameter less than $\eps$, we have $\hat{\varphi} |_\xi \neq \hat{\psi} |_\xi$ for all
$\varphi,\psi\in \Map_\mu (\rho,F,L,\delta,\sigma)$ satisfying $\rho_\infty (\varphi,\psi) > \eps$.
Therefore 
\[
N_\eps (\Map_\mu (\rho,F,L,(|F|n)^{-1}\delta ,\sigma),\rho_\infty ) \leq |\Hom_\mu (\alpha,F,\delta,\sigma)|_\xi 
\]
and hence $h_{\Sigma,\mu}^\eps (\rho,F,L,(|F|n)^{-1}\delta) \leq h_{\Sigma,\mu}^\xi (\alpha,F,\delta)$.
We thus deduce that
\[
h_{\Sigma,\mu} (\rho) - \kappa \leq h_{\Sigma,\mu}^\eps (\rho) \leq h_{\Sigma,\mu}^\xi (\alpha) \leq 
h_{\Sigma,\mu} (X,G) + \kappa .
\]
Since $\kappa$ was an arbitrary positive number we conclude that 
$h_{\Sigma,\mu}' (X,G) \leq h_{\Sigma,\mu} (X,G)$. In the case $h_{\Sigma,\mu} (\rho) = \infty$ we can apply
the same argument only starting from the fact that for every $M>0$ there is an $\eps > 0$ for which 
$h_{\Sigma,\mu}^\eps (\rho) \geq M$.
\end{proof}

\section{Bernoulli actions}\label{S-Bernoulli}

Here we show how to compute the sofic entropy of Bernoulli actions according to Definition~\ref{D-sofic meas ent}. 
This only depends on the asymptotic freeness of the sofic approximation sequence, 
in contrast to Definition~\ref{D-sofic meas ent} and Theorem~\ref{T-generator}, 
which do not depend on it at all. The following lemma will 
permit us to reduce the computation to Bowen's arguments in the finite base case \cite{Bow10}.
As in the previous sections, $\Sigma = \{ \sigma_i : G \to \Sym (d_i ) \}_{i=1}^\infty$ is an
arbitrary fixed sofic approximation sequence.

\begin{lemma}\label{L-partition ent ineq}
Let $(X,\mu)$ be a probability space and $G\curvearrowright X$ a measure-preserving action.
Let $\xi$ and $\alpha$ be finite measurable partitions of $X$ with $\alpha\geq\xi$. Then
\begin{enumerate}
\item $h_{\Sigma,\mu}^\xi (\alpha)\leq H_\mu (\xi)$,

\item $h_{\Sigma,\mu}^\xi (\alpha) \geq h_{\Sigma,\mu}^\alpha (\alpha) - H_\mu (\alpha|\xi)$.
\end{enumerate}
\end{lemma}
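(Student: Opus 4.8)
The plan is to prove both inequalities by direct counting, using the method of types (equivalently the multinomial Stirling estimate) as the single combinatorial engine. The estimate I will invoke is: for a probability vector $p=(p_1,\dots,p_m)$ and $\delta'>0$, the number of ordered partitions $(P_1,\dots,P_m)$ of $\{1,\dots,d\}$ with $\sum_j |\zeta(P_j)-p_j|<\delta'$ is at most $(d+1)^m e^{d(H(p)+\eps(\delta'))}$, where $H(p)=-\sum_j p_j\log p_j$ and $\eps(\delta')\to 0$ as $\delta'\to 0$ uniformly in $d$; this holds because there are at most $(d+1)^m$ types and every type within $\ell^1$-distance $\delta'$ of $p$ has $H$-value at most $H(p)+\eps(\delta')$ by uniform continuity of Shannon entropy. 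Throughout I may assume $e\in F$: enlarging $F$ only shrinks $\Hom_\mu(\alpha,F,\delta,\sigma)$, so replacing each $F$ by $F\cup\{e\}$ leaves the infimum over $F$ unchanged. In particular $\alpha\leq\alpha_F$, so restriction to $\Sigma(\alpha)$ makes sense and condition (ii) controls the $\zeta$-sizes of the sets $\varphi(A)$ for $A\in\alpha$.

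For (i), a $\varphi\in\Hom_\mu(\alpha,F,\delta,\sigma)$ restricted to $\xi=\{B_1,\dots,B_m\}$ is precisely the ordered partition $(\varphi(B_1),\dots,\varphi(B_m))$ of $\{1,\dots,d\}$. Summing condition (ii) over the atoms of $\alpha_F$ contained in a fixed $B_j$ and then over $j$, the triangle inequality gives $\sum_j |\zeta(\varphi(B_j))-\mu(B_j)|<\delta$, so each restriction satisfies the hypothesis of the combinatorial estimate with $p$ the $\xi$-distribution and $\delta'=\delta$. Hence $|\Hom_\mu(\alpha,F,\delta,\sigma_i)|_\xi\leq (d_i+1)^m e^{d_i(H_\mu(\xi)+\eps(\delta))}$, and applying $\tfrac{1}{d_i}\log$, then $\limsup_i$ (the polynomial factor vanishes), then $\inf_\delta$ and $\inf_F$ yields $h_{\Sigma,\mu}^\xi(\alpha)\leq H_\mu(\xi)$.

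For (ii), I will bound the fibers of the refinement map $\Hom_\mu(\alpha,F,\delta,\sigma)|_\alpha\to\Hom_\mu(\alpha,F,\delta,\sigma)|_\xi$, which is well defined since $\xi\leq\alpha$. Fixing a $\xi$-restriction gives blocks $P_j=\varphi(B_j)$ with $|P_j|\approx\mu(B_j)d$ (again by (ii)). An $\alpha$-restriction lying over it partitions each $P_j$ into the sets $\varphi(A)$ for the atoms $A\in\alpha$ with $A\subseteq B_j$, whose sizes are approximately $\mu(A)d$, hence whose proportions within $P_j$ are approximately $\mu(A)/\mu(B_j)$. Applying the combinatorial estimate inside each block bounds the number of such sub-partitions by $(|P_j|+1)^{|\alpha|}e^{|P_j|(H_j+\eps(\delta))}$, where $H_j$ is the Shannon entropy of $(\mu(A)/\mu(B_j))_{A\subseteq B_j}$; multiplying over $j$ and using $\sum_j\mu(B_j)H_j=H_\mu(\alpha|\xi)$ gives a fiber bound $(d+1)^{|\xi||\alpha|}e^{d(H_\mu(\alpha|\xi)+\eps(\delta))}$. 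Thus $|\Hom_\mu(\alpha,F,\delta,\sigma_i)|_\alpha\leq (d_i+1)^{|\xi||\alpha|}e^{d_i(H_\mu(\alpha|\xi)+\eps(\delta))}\,|\Hom_\mu(\alpha,F,\delta,\sigma_i)|_\xi$, and applying $\tfrac{1}{d_i}\log$, $\limsup_i$, $\inf_\delta$, $\inf_F$ gives $h_{\Sigma,\mu}^\alpha(\alpha)\leq h_{\Sigma,\mu}^\xi(\alpha)+H_\mu(\alpha|\xi)$, which rearranges to (ii).

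The two Stirling estimates are routine and I would cite or dispatch them quickly. The one point needing genuine care is the conditional counting in (ii): I must confirm that the global $\ell^1$-control of the $\alpha$-atom sizes at scale $\delta$ (condition (ii)) forces each within-block proportion close enough to $\mu(A)/\mu(B_j)$ that every block contributes only $e^{|P_j|(H_j+\eps(\delta))}$ with one common error $\eps(\delta)\to 0$, so that the per-block bounds multiply to the conditional entropy, and that the subexponential factors $(d+1)^{|\xi||\alpha|}$ are harmless after dividing by $d_i$. Blocks with $\mu(B_j)=0$ are handled by the convention $0\log 0=0$.
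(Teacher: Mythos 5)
Your proposal is correct and takes essentially the same route as the paper's own proof: part (i) is the same method-of-types/Stirling count of ordered partitions of $\{1,\dots,d\}$ whose distribution is $\ell^1$-close to that of $\xi$ (the paper merely reduces to $F=\{e\}$ by monotonicity rather than assuming $e\in F$), and part (ii) is the same fiber bound for the restriction map from $\alpha$-restrictions to $\xi$-restrictions, with per-block type counts multiplying to $e^{d(H_\mu(\alpha|\xi)+o(1))}$ exactly as in the paper's product $\prod_i |Q_i|$. The small-block issue you flag is handled correctly for the reason you give: $\xi$ and $\alpha$ are fixed before $\delta$ is chosen, so $\delta$ may be taken small relative to the smallest positive $\mu(B_j)$, and null blocks contribute at most $\delta\log|\alpha|$ apiece.
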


\begin{proof}
First we prove (i). 
Write $\xi = \{ B_1 , \dots , B_n \}$. Let $\eps > 0$. 
By the continuity properties of $H(\cdot)$, there is a $\delta > 0$ such that, for all $d\in\Nb$,
if $\gamma = \{ C_1 , \dots , C_n \}$ is an ordered partition of $\{ 1,\dots , d\}$ with 
$\sum_{i=1}^n \big| \mu (B_i) - \zeta (C_i) \big| < \delta$ then $|H_\mu (\xi) - H_\zeta (\gamma)| < \eps$.

Fix a $d\in\Nb$. Write $T$ for the set of tuples $(c_1,\dots,c_n)\in \{ 1/d,2/d\dots ,1\}^n$
such that $\sum_{i=1}^n c_i = 1$ and $\sum_{i=1}^n \big| \mu (B_i) - c_i \big| < \delta$. 
For each $c=(c_1,\dots,c_n)\in T$ write $W_c$ for the set of all ordered partitions $\gamma = \{ C_1 , \dots , C_n \}$ 
of $\{ 1,\dots , d\}$ such that $|C_i|/d = c_i$ for every $i=1,\dots,n$. 
By our choice of $\delta$ and Stirling's approximation, we have, assuming $d$ is sufficiently large,
\[
|W_c| = \frac{d!}{(c_1 d)! \cdots (c_n d)!}\leq 
\prod_{i=1}^n c_i^{-c_i d(1+\eps)}\leq e^{d(1+\eps)(H_\mu (\xi) + \eps)}
\]
for every $c\in T$.
Note also that $|T|\leq (2\delta d)^n$ since there are at most $2\delta d$ choices for the value of each 
$c_i$ among the elements of $T$. The total number of homomorphisms $\varphi : \Sigma(\xi)\to\cP_d$
satisfying $\sum_{i=1}^n |\zeta(\varphi(B_i) - \mu(B_i)| < \delta$ is equal to $|\bigcup_{c\in T} W_c |$, 
and from what we have just observed this is bounded above by 
$(2\delta d)^n e^{d(1+\eps)(H_\mu (\xi) + \eps)}$. Hence  
\begin{align*}
h_{\Sigma,\mu}^\xi (\alpha) \leq h_{\Sigma,\mu}^\xi (\alpha,\{ e \},\delta) 
\leq h_{\Sigma,\mu}^\xi (\xi,\{ e \},\delta) \leq (1+\eps)(H_\mu (\xi) + \eps) . 
\end{align*}
Since $\eps$ was an arbitrary positive number we obtain (i).

Now let us prove (ii). Let $\eps > 0$. 
Write $\xi = \{ B_1,\dots,B_m\}$.
For each $i=1,\dots ,m$ write $\alpha_i = \{ A_{i,1} , \dots ,A_{i,n_i}\}$ for the partition of $B_i$ consisting
of those members of $\alpha$ contained in $B_i$. By the continuity properties of $H(\cdot)$
there is a $\delta > 0$ such that, for all $d\in\Nb$,
if $\psi : \Sigma(\alpha)\to\cP_d$ is a homomorphism satisfying
$\sum_{A\in\alpha} \big| \zeta(\psi(A)) - \mu (A) \big| < \delta$ 
then $|H_{\mu_i} (\alpha_i) - H_{\zeta_i} (\psi(\alpha_i))| < \eps$ for every $i=1,\dots,m$,
where $\mu_i$ is $\mu(B_i)^{-1}$ times the restriction of $\mu$ to $B_i$ and
$\zeta_i$ is $\zeta(\psi(B_i))^{-1}$ times the restriction of $\zeta$ to $\psi(B_i)$.

Suppose we are given a homomorphism $\psi : \Sigma(\alpha)\to\cP_d$ satisfying 
$\sum_{A\in\alpha} \big| \zeta(\psi(A)) - \mu (A) \big| < \delta$.
Write $Q_i$ for the set of all ordered partitions $\{ C_1 ,\dots ,C_{n_i} \}$ of $\psi (B_i)$
satisfying $\sum_{j=1}^{n_i} \big| \mu (A_{i,j}) - \zeta (C_j) \big| < \delta$. Then the
set of all homomorphisms $\varphi : \Sigma(\alpha)\to\cP_d$ which satisfy 
$\sum_{i=1}^m \big| \zeta(\varphi(B_i)) - \mu (B_i) \big| < \delta$ and restrict to $\psi$ on $\xi$ 
has cardinality at most $\prod_{i=1}^m |Q_i|$. 
By an estimate as in the second paragraph using Stirling's approximation, assuming $d$ is large enough
the set $Q_i$ has cardinality at most 
$(2\delta d)^{n_i} e^{\zeta(\psi(B_i))(1+\eps)(H_{\mu_i}(\alpha_i) + \eps)}$, and since 
\[
\sum_{i=1}^m \zeta(\psi(B_i))H_{\mu_i} (\alpha_i) \leq \sum_{i=1}^m \mu(B_i) H_{\mu_i} (\alpha_i) + \delta
= H_\mu (\alpha|\xi) + \delta
\]
this gives
\[
\prod_{i=1}^m |Q_i| \leq (2\delta d)^m e^{(1+\eps)(H_\mu (\alpha|\xi) + \delta + \eps)} .
\]
It follows that for every nonempty finite set $F\subseteq G$ we have
\begin{align*}
h_{\Sigma,\mu}^\xi (\alpha,F,\delta) \geq h_{\Sigma,\mu}^\alpha (\alpha,F,\delta) 
- (1+\eps)(H_\mu (\alpha | \xi) + \delta + \eps) 
\end{align*}
and hence 
$h_{\Sigma,\mu}^\xi (\alpha,F)\geq h_{\Sigma,\mu}^\alpha (\alpha) - (1+\eps)(H_\mu (\alpha | \xi) + \eps)$
as $\delta$ can be taken arbitrarily small.
Since $\eps$ was an arbitrary positive number, this yields (ii).
\end{proof}

For a probability space $(Y,\nu)$ we define $H(\nu)$ to be the supremum of $H_\nu (\alpha)$
over all finite measurable partitions $\alpha$ of $Y$.

\begin{theorem}
Let $(Y,\nu)$ be a probability space and let $G\curvearrowright (Y^G,\nu^G)$ be the associated Bernoulli action.
Then $h_{\Sigma,\nu^G} (Y^G,G) = H(\nu)$.
\end{theorem}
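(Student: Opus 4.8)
The plan is to invoke the Kolmogorov--Sinai theorem (Theorem~\ref{T-generator}) with the \emph{time-zero} $\sigma$-algebra $\cS_0\subseteq\cB$ consisting of those sets in $Y^G$ that depend only on the coordinate at the identity $e\in G$. Since $\bigvee_{s\in G} s\cS_0 = \cB$, the algebra $\cS_0$ is generating, and hence $h_{\Sigma,\nu^G}(Y^G,G) = h_{\Sigma,\nu^G}(\cS_0)$. Finite partitions contained in $\cS_0$ are exactly the pullbacks of finite measurable partitions of $(Y,\nu)$, and I will identify the two throughout; for such partitions $H_{\nu^G}(\cdot) = H_\nu(\cdot)$ because $\nu^G$ projects to $\nu$ on the $e$-coordinate. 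The strategy is then to prove $h_{\Sigma,\nu^G}^\xi(\cS_0) = H_\nu(\xi)$ for every finite partition $\xi$ of $Y$, from which $h_{\Sigma,\nu^G}(\cS_0) = \sup_\xi H_\nu(\xi) = H(\nu)$ by the definition of $H(\nu)$.

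For the upper bound, Lemma~\ref{L-partition ent ineq}(i) gives $h_{\Sigma,\nu^G}^\xi(\alpha)\leq H_{\nu^G}(\xi) = H_\nu(\xi)$ for every finite $\alpha\subseteq\cS_0$ refining $\xi$, so taking the infimum over $\alpha$ yields $h_{\Sigma,\nu^G}^\xi(\cS_0)\leq H_\nu(\xi)$ at once. For the lower bound I will use Lemma~\ref{L-partition ent ineq}(ii), namely $h_{\Sigma,\nu^G}^\xi(\alpha)\geq h_{\Sigma,\nu^G}^\alpha(\alpha) - H_{\nu^G}(\alpha\mid\xi)$. Combining this with the chain rule $H_\nu(\alpha) = H_\nu(\xi) + H_\nu(\alpha\mid\xi)$ for $\alpha\geq\xi$, it suffices to establish the diagonal estimate $h_{\Sigma,\nu^G}^\alpha(\alpha)\geq H_\nu(\alpha)$ for every finite partition $\alpha$ of $Y$; this forces $h_{\Sigma,\nu^G}^\xi(\alpha)\geq H_\nu(\xi)$ for all such $\alpha$, and therefore $h_{\Sigma,\nu^G}^\xi(\cS_0)\geq H_\nu(\xi)$ after taking the infimum over $\alpha$.

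It remains to prove $h_{\Sigma,\nu^G}^\alpha(\alpha)\geq H_\nu(\alpha)$, and this is where Bowen's finite-base analysis enters and where essentially all the work sits. Writing $\alpha = \{A_1,\dots,A_n\}$ with $p_j = \nu(A_j)$, the restriction to $\alpha$ of a homomorphism in $\Hom_{\nu^G}(\alpha,F,\delta,\sigma)$ is exactly a coloring $c:\{1,\dots,d\}\to\{1,\dots,n\}$ given by $c^{-1}(j) = \varphi(A_j)$, so that $|\Hom_{\nu^G}(\alpha,F,\delta,\sigma)|_\alpha$ counts the colorings arising from valid models. Given a coloring $c$, the candidate homomorphism is built by setting $\varphi\big(\bigcap_{s\in F} sA_{g(s)}\big) = \{k : c(\sigma_s^{-1}(k)) = g(s)\text{ for all }s\in F\}$; since $sA_j$ is the coordinate-$s$ cylinder, this makes the equivariance condition~(i) hold essentially exactly (using $\sigma_e\approx\id$), while the product structure $\nu^G\big(\bigcap_{s\in F} sA_{g(s)}\big) = \prod_{s\in F} p_{g(s)}$ turns the measure condition~(ii) into the assertion that the empirical joint distribution of the window $(c(\sigma_s^{-1}(\cdot)))_{s\in F}$ is close to the product of its marginals. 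The main obstacle is to show, as Bowen does in the finite-base case \cite{Bow10}, that asymptotic freeness of $\Sigma$ forces this product behavior for almost all colorings whose empirical marginal is close to $(p_1,\dots,p_n)$; granting this, the number of valid colorings is, up to subexponential factors, the multinomial coefficient $\binom{d}{p_1 d,\dots,p_n d}$, which by Stirling grows like $e^{dH_\nu(\alpha)}$, yielding $h_{\Sigma,\nu^G}^\alpha(\alpha,F,\delta)\geq H_\nu(\alpha) - o(1)$ and hence the desired bound as $F$ grows and $\delta\to 0$. I expect the delicate point to be the control of the exceptional colorings that fail condition~(ii), rather than any step of the reduction, and this is precisely what the passage to \cite{Bow10} is meant to supply.
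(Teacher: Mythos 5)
Your proposal is correct and takes essentially the same route as the paper: the time-zero $\sigma$-algebra together with Theorem~\ref{T-generator}, Lemma~\ref{L-partition ent ineq}(i) for the upper bound, Lemma~\ref{L-partition ent ineq}(ii) plus the chain rule to reduce everything to the diagonal estimate $h_{\Sigma,\nu^G}^\alpha(\alpha)\geq H_\nu(\alpha)$, and the identical coloring construction $\varphi_\gamma(\bigcap_{s\in F}sA_{f(s)})=\bigcap_{s\in F}\sigma_s\gamma^{-1}(f(s))$ for that estimate. The one point of difference is that the paper executes the step you delegate to \cite{Bow10} in full --- a second-moment/Chebyshev bound showing condition~(ii) holds with $\kappa^d$-probability at least $1-\delta$, converted into a count not by type-class counting but via the Shannon--McMillan bound $\kappa^d(\{\gamma\})\leq e^{-d(H(\kappa)-\delta)}$ on a set of measure $1-\delta$, which is the clean way to make precise your ``almost all colorings with near-typical empirical marginal'' heuristic.
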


\begin{proof}
Let $\cS$ be the $\sigma$-algebra consisting of those measurable subsets of $Y^G$ which are cylinder sets over $e$.
Then $\cS$ is generating and so it suffices by Theorem~\ref{T-generator} to show that 
$h_{\Sigma,\mu} (\cS) = H(\nu)$.
By Lemma~\ref{L-partition ent ineq}(i) we have $h_{\Sigma,\mu} (\cS)\leq H(\nu)$, and so
we concentrate on the reverse inequality. 

Let $\xi$ and $\alpha$ be finite partitions in $\cS$ with $\alpha\geq\xi$. We will show that 
$h_{\Sigma,\mu}^\xi (\alpha) \geq H_\mu (\xi)$, from which the desired inequality 
$h_{\Sigma,\mu} (\cS)\geq H(\nu)$ ensues.
By Lemma~\ref{L-partition ent ineq}(ii) we need only prove that $h_{\Sigma,\mu}^\alpha (\alpha) \geq H_\mu (\alpha)$.
This is essentially contained in Theorem~8.1 in \cite{Bow10}, and we will reproduce the argument from there.

Let $\delta > 0$. Let $\eta > 0$ be such that $2\eta < |\alpha|^{-|F|} \delta$.
Let $F$ be a finite subset of $G$ containing $e$.
Let $d\in\Nb$ and let $\sigma$ be a map from $G$ to $\Sym(d)$ which is a sufficiently good
sofic approximation to satisfy a couple of conditions to be specified below. 
Write $V$ for the set
of all $v\in\{1,\dots ,d\}$ such that $\sigma_s \sigma_t (v) = \sigma_{st} (v)$ for all $s,t\in F$ 
and $\sigma_s^{-1} (v) \neq \sigma_t^{-1} (v)$ for all distinct $s,t\in F$.

Enumerate the elements of $\alpha$ as $A_1 , \dots ,A_n$. 
Write $\kappa$ for the probability measure on $\{ 1,\dots ,n\}$ determined by $\kappa (\{ i \}) = \mu (A_i)$.
We view $\{ 1,\dots ,n\}^d$ as a probability space with the product measure $\kappa^d$.

Fix an $f\in \{ 1,\dots,n \}^F$. Given a $\gamma\in \{1,\dots,n\}^d$ we can think of it as
an ordered partition $\{\gamma^{-1} (1),\dots ,\gamma^{-1} (n)\}$ of $\{ 1,\dots,d \}$, and in accord with this viewpoint
we set $Q_{\gamma,f} = \bigcap_{s\in F} \sigma_s \gamma^{-1} (f(s))$.
Write $P_f$ for $\bigcap_{s\in F} sA_{f(s)}$. For every $\gamma\in \{ 1,\dots ,n\}^d$
denote by $\varphi_\gamma$ the homomorphism $\Sigma(\alpha_F) \to\cP_d$ determined by
$\varphi_\gamma (P_f) = Q_{\gamma,f}$. Note that for $s\in F$ and $i=1,\dots,n$ we have,
writing $\Upsilon_{s,i}$ for the set of $f\in \{1,\dots,n\}^F$ such that $f(s)=i$,
\begin{align*}
sA_i = sA_i \cap X = sA_i \cap \bigg( \bigcap_{t\in F\setminus\{s\}} \bigsqcup_{j=1}^n tA_j \bigg)
= \bigsqcup_{f\in\Upsilon_{s,i}} P_f
\end{align*}
and similarly $\sigma_s \gamma^{-1} (i) = \bigsqcup_{f\in\Upsilon_{s,i}} Q_{\gamma,f}$ so that
\begin{align*}
|\varphi_\gamma (sA_i) \Delta \sigma_s \varphi_\gamma (A_i) |
\leq |\sigma_s (\gamma^{-1} (i) \Delta \sigma_e \gamma^{-1} (i))| 
\leq |\{ v\in \{ 1,\dots,d\} : \sigma_e (v) \neq v \}| < \delta d 
\end{align*}
assuming $\sigma$ is a good enough sofic appproximation.
Thus we get that condition (i) in the definition of $\Hom_\mu (\alpha,F,\delta,\sigma)$ is
satisfied by $\varphi_\gamma$ for all $\gamma$. We now aim to get a lower bound on the number of $\gamma$ for which
$\varphi_\gamma$ satisfies condition (ii) in the same definition and hence lies in $\Hom_\mu (\alpha,F,\delta,\sigma)$.

For $v\in\{1,\dots,d\}$ we let $Z_v = Z_{v,f}$ be the function (random variable) on $\{ 1,\dots ,n\}^d$ 
which at a point $\gamma$ takes the value $1$ 
if $v\in V\cap Q_{\gamma,f}$ and $0$ otherwise.

Write $\Eb (\cdot)$ for the expected value of a function on $\{ 1,\dots,n \}^d$, 
that is, the integral with respect to $\kappa^d$. For $v\notin V$ we have $\Eb (Z_v) = 0$.
For $v\in V$, since $\sigma_s^{-1} (v) \neq \sigma_s^{-1} (v)$ for distinct $s,t\in V$ we have 
\begin{align*}
\Eb (Z_v) &= \kappa^d \big(\big\{ \gamma\in \{1,\dots,n\}^d : \sigma_s^{-1} (v) \in \gamma^{-1} (f(s))
\text{ for every } s\in F \big\}\big) \\
&= \prod_{s\in F} \kappa (\{ f(s) \}) = \prod_{s\in F} \nu(sA_{f(s)}) 
= \nu^G (P_f) .
\end{align*}
Set $Z = \sum_{v=1}^d Z_v$. We will estimate the variance $\Var (Z)$.
Let $v,w\in \{ 1,\dots ,d\}$. If one of $v$ and $w$ is not in $V$ then $Z_v Z_w = 0$ 
and so $\Eb (Z_v Z_w) = 0$. 
If $\sigma_s (v) \neq \sigma_t (w)$ for all $s,t\in F$ then
$Z_v$ and $Z_w$ are independent, i.e., $\Eb (Z_v Z_w) = \Eb (Z_v) \Eb (Z_w)$.
Thus the number of pairs $(v,w)\in V\times V$ for which $Z_v$ and $Z_w$ are not 
independent is at most $|V||F|^2$, which is bounded above by $d|F|^2$. Hence
\begin{align*}
\Eb (Z^2 ) = \sum_{v,w=1}^d \Eb (Z_v Z_w) 
\leq \sum_{v,w=1}^d \Eb (Z_v) \Eb (Z_w) + d|F|^2
= \Eb (Z)^2 + d|F|^2 . 
\end{align*}
and so $\Var (Z) = \Eb (Z^2) - \Eb (Z)^2 \leq d|F|^2$. Chebyshev's inequality then yields, for all $t>0$,
\begin{align*}
\Prob \big( |Z/d - \Eb (Z)/d | > t \big) \leq \frac{\Var (Z)}{d^2 t^2} \leq \frac{|F|^2}{dt^2} .
\end{align*}
Assuming that $\sigma$ is a good enough sofic approximation so that $\zeta (V) \geq 1-\eta$, for every $\gamma$ we have
\begin{align*}
\big|\zeta (Q_{\gamma,f}) - \nu^G (P_f) \big|
&\leq \big| \zeta (Q_{\gamma,f}) - \zeta (V\cap Q_{\gamma,f}) \big|
+ |(Z/d)(\gamma) - \Eb (Z)/d | \\
&\hspace*{20mm} \ + \big| \zeta (V)\nu^G (P_f) - \nu^G (P_f) \big| \\
&\leq |(Z/d)(\gamma) - \Eb (Z)/d | + 2\eta
\end{align*}
and thus, for $t>2\eta$,
\begin{align*}
\Prob \big( |\zeta (Q_{\gamma,f}) - \nu^G (P_f) | > t \big) 
\leq \frac{|F|^2}{d(t-2\eta)^2} .
\end{align*}
Taking $t = n^{-|F|} \delta$, which is larger than $2\eta$, we get, assuming $d$ is large enough,
\begin{align*}
\Prob \bigg( \big|\zeta (Q_{\gamma,f}) - \nu^G (P_f) \big| > \frac{\delta}{n^{|F|}}\bigg) \leq \frac{\delta}{n^{|F|}}
\end{align*}
Thus the probability that a $\gamma\in\{ 1,\dots ,n\}^d$ 
satisfies $|\zeta (Q_{\gamma,f}) - \nu^G (P_f) | > n^{-|F|}\delta$ for some $f\in \{ 1,\dots ,n\}^F$
is at most $\delta$. If this does not happen for a given $\gamma$ then
\begin{align*}
\sum_{f\in\{ 1,\dots,n \}^F} \big|\zeta(\varphi_\gamma (P_f)) - \nu^G (P_f) \big|
= \sum_{f\in\{ 1,\dots,n \}^F} \big|\zeta(Q_{\gamma,f}) - \nu^G (P_f) \big| < \delta .
\end{align*}
Hence the probability that $\gamma$ satisfies $\varphi_\gamma \in\Hom_\mu (\alpha,F,\delta,\sigma)$ is at least $1-\delta$.
Now we need to use this to estimate the actual number of $\gamma$ satisfying 
$\varphi_\gamma \in\Hom_\mu (\alpha,F,\delta,\sigma)$.

With $\gamma$ ranging as usual in the probability space $\{ 1,\dots,n \}^d$ we have,
by the law of large numbers and the independence of the coordinates of $\gamma$,
\[
\lim_{d\to\infty} \Prob \Big( \Big| -\frac1d \log \kappa^d (\gamma) - H(\kappa) \Big| > \delta \Big) = 0 .
\]
Thus, assuming $d$ is sufficiently large we can find an $L\subseteq \{ 1,\dots,n\}^d$ for which $\kappa^d (L) > 1-\delta$ 
and $\kappa^d (\{ \gamma \}) \leq e^{-d(H(\kappa)-\delta)}$
for all $\gamma\in L$. Then, writing $L_0$ for the set of all $\gamma\in L$ such that 
$\varphi_\gamma \in\Hom_\mu (\alpha,F,\delta,\sigma)$ we have $\kappa^d (L_0 ) \geq 1-2\delta$ and hence
\begin{align*}
|\Hom_\mu (\alpha,F,\delta,\sigma)|_\alpha \geq
|L_0| \geq \kappa^d (L_0) e^{d(H(\kappa)-\delta)} \geq (1-2\delta) e^{d(H(\kappa)-\delta)} .
\end{align*}
Since $\delta$ can be taken arbitrarily small,
it follows that $h_{\Sigma,\nu^G}^\alpha (\alpha,F) \geq H(\kappa) = H_\mu (\alpha)$.
Since $F$ was arbitrary finite subset of $G$ containing $e$, we conclude that
$h_{\Sigma,\nu^G}^\alpha (\alpha) \geq H_\mu (\alpha)$.
\end{proof}

\end{document}